\documentclass[10pt,draft,twoside]{amsart}
\usepackage{amssymb}
\usepackage{latexsym}
\usepackage{amsfonts}
\usepackage{amsmath}

\oddsidemargin 0pt
\evensidemargin 0pt
\textheight 8.1in \textwidth 6.3in

\relpenalty=10000
\binoppenalty=10000
\tolerance=500

\newtheorem{theorem}{Theorem}[section]

\newtheorem{lemma}[theorem]{Lemma}

\theoremstyle{definition}

\newtheorem{remark}[theorem]{Remark}
\newtheorem{definition}[theorem]{Definition}

\mathsurround=1pt

\headheight 14pt

\parskip 5pt

\renewcommand{\wr}{\,{\sf wr}\,}

\newcommand{\F}{\mathbb F}

\renewcommand{\leq}{\leqslant}
\renewcommand{\geq}{\geqslant}

\newcommand{\nei}{G_{\Gamma_1(C)}}

\newcommand{\Aut}{\mathrm{Aut}}
\newcommand{\Gr}{\mathrm{Aut}(\Gamma)}

\DeclareMathOperator{\Triples}{\sf{Triples}}
\DeclareMathOperator{\Pre}{Pre}
\DeclareMathOperator{\AGL}{AGL}
\DeclareMathOperator{\Perm}{Perm}
\DeclareMathOperator{\wt}{wt}
\DeclareMathOperator{\Fm2}{\F_2^{m/2}}
\DeclareMathOperator{\GL}{GL}

\begin{document}

\title{Neighbour
  Transitivity on Codes in Hamming graphs}
\author{Neil I. Gillespie and Cheryl E. Praeger}
\address{[Gillespie and Praeger] Centre for Mathematics of Symmetry and Computation\\
School of Mathematics and Statistics\\
The University of Western Australia\\
35 Stirling Highway, Crawley\\
Western Australia 6009}

\email{neil.gillespie@graduate.uwa.edu.au, cheryl.praeger@uwa.edu.au}

\begin{abstract}
We consider a \emph{code} to be a subset of the vertex set of a
\emph{Hamming graph}.  In this setting a \emph{neighbour} of the code
is a vertex which differs in exactly one entry from some codeword.
This paper examines codes with the property that some group of
automorphisms acts transitively on the \emph{set of neighbours} of the
code.  We call these codes \emph{neighbour transitive}.  We obtain
sufficient conditions for a neighbour transitive group to fix the code
setwise.  Moreover, we construct an infinite family of neighbour
transitive codes, with \emph{minimum distance} $\delta=4$, where this
is not the case.  That is to say, knowledge of even the complete set
of code neighbours does not determine the code.      
\end{abstract}

\thanks{{\it Date:} draft typeset \today\\
{\it 2000 Mathematics Subject Classification: 20B25, 94B05, 05C25.}\\
{\it Key words and phrases: neighbour transitive codes, Hamming graphs, error correcting codes} \\
The first author is supported by an
  Australian Postgraduate Award and the second by Australian Research
  Council Federation Fellowship FF0776186.}

\maketitle

\section{Introduction}

We consider codes to be subsets of ordered $m$-tuples from a fixed
alphabet $Q$ of size $q$ and so it is natural to consider codes as
subsets of vertices of \emph{Hamming graphs} (see Section
\ref{secnot}).  In this setting a codeword in which exactly one of the
entries has been changed is adjacent in the Hamming graph to the
codeword and, provided it is not a codeword itself, we call it a
\emph{neighbour} of that codeword in the code.  For a code $C$ in a
Hamming graph, $\Gamma=H(m,q)$, the \emph{set of neighbours of $C$} is
the subset $\Gamma_1(C)$ consisting of the vertices of $\Gamma$ which
are not in $C$ but are joined by an edge to at least one element of
$C$.  In this paper we examine codes with the following property.     

\begin{definition}\label{def} Let $C$ be a code in
  $H(m,q)$.  Then we say that $C$ is \emph{neighbour transitive} if there
  exists a subgroup $X$ of the automorphism group of $H(m,q)$ that
  fixes setwise and acts transitively on the set of neighbours of
  $C$.  If we want to specify the group, we say $C$ is
  $X$-neighbour transitive. 
\end{definition}

\noindent Some much stronger group theoretic conditions than those
introduced here, such as \emph{complete transitivity} have been
studied previously in \cite{sole}, then in \cite{nonexist1},
\cite{nonexist2} and in a more general context in \cite{guidici}.     

In Definition \ref{def} it is not assumed that $X$ acts transitively on
$C$.  Indeed it was the question of whether the neighbour transitive
group $X$ was forced to fix $C$ setwise that led to the study and the
results of this paper.  The answer depends on the \emph{minimum
  distance}, $\delta$, of the code $C$.  This is the minimum number of
positions in which two distinct codewords from $C$ differ;
equivalently $\delta$ is the minimum distance in $H(m,q)$ between
distinct codewords in $C$. 

\begin{theorem}\label{thm}  Let $C$ be a code in $\Gamma=H(m,q)$ with
  $\delta\geq 3$, and let $x$ be an automorphism of $\Gamma$ fixing
  setwise the set $\Gamma_1(C)$ of neighbours of $C$.  Then at least
  one of the following holds:   
\begin{enumerate}
\item x fixes $C$ setwise, or
\item $\delta=4$, $q=2$ and $m$ is even,
\item $\delta=3$, and $m(q-1)$ is even.
\end{enumerate}
Moreover, for each even $m$ at least $4$, there exists an
$X$-neighbour transitive code $C\subset H(m,2)$ with minimum distance
$\delta=4$ such that $X$ does not fix $C$ setwise. 
\end{theorem}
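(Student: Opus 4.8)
The plan is to establish the trichotomy first and the construction afterwards. Throughout, set $C'=x(C)$; since $x$ preserves distances and fixes $\Gamma_1(C)$ setwise, $C'$ is a code with $\Gamma_1(C')=\Gamma_1(C)$ and the same minimum distance $\delta$. Outcome (1) is precisely the assertion $C'=C$, so I may assume $C'\neq C$ and must land in (2) or (3). The first move is to record the local geometry: because $\delta\geq 3$, every vertex at distance $1$ from a codeword is a neighbour lying at distance $1$ from a unique codeword, so $\Gamma_1(C)$ is the disjoint union of the bundles $N(c)=\{v:d(v,c)=1\}$ ($c\in C$), each of size $m(q-1)$; two neighbours in one bundle are at distance $1$ (same altered coordinate) or $2$ (different coordinates), while two neighbours in different bundles are at distance $\geq\delta-2$.

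I then recover $C$ from $\Gamma_1(C)$ when $\delta$ is large. \emph{(i)} If $\delta\geq 5$, the relation ``$d(u,v)\leq 2$'' on $\Gamma_1(C)$ has the bundles as its classes; as $x$ preserves this relation it permutes the bundles, and since each $c$ is the unique vertex at distance $1$ from every element of $N(c)$ (note $m\geq\delta\geq 5$), $x$ permutes $C$, giving $C'=C$. \emph{(ii)} If $\delta=4$ and $q\geq 3$, distinct bundles are non-adjacent, so the connected components of the adjacency graph on $\Gamma_1(C)$ are exactly the partial lines of $N(c)$ (neighbours of $c$ altering one fixed coordinate), each a clique of size $q-1\geq 2$; such a clique lies in a unique line of $\Gamma$, whose missing vertex is $c$, so again $x$ permutes $C$ and $C'=C$. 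Hence $C'\neq C$ already forces $\delta\in\{3,4\}$, with $\delta=4$ forcing $q=2$.

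It remains to prove $m(q-1)$ even whenever $C'\neq C$, and this is the step I expect to be most delicate, since for $\delta=4,q=2$ and for $\delta=3$ the neighbour set genuinely underdetermines the code. Fix $c\in C\setminus C'$. As $c\notin C'$ and $c\notin\Gamma_1(C')=\Gamma_1(C)$, the codeword $c$ is adjacent to no $C'$-codeword; yet each $v\in N(c)\subseteq\Gamma_1(C')$ has a unique nearest $C'$-codeword $w(v)$ at distance $1$ (uniqueness from $\delta\geq 3$). A short check gives $d(c,w(v))=2$: at most $2$ because $v$ is a common neighbour, at least $2$ because $c$ meets neither $C'$ nor $\Gamma_1(C')$. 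For such a $w$, the vertices at distance $1$ from both $c$ and $w$ are exactly the two obtained from $c$ by copying one of the two coordinates in which $w$ differs from $c$; both lie in $N(c)$ and each has $w$ as its unique nearest $C'$-codeword. Thus $v\mapsto$``the other common neighbour of $c$ and $w(v)$'' is a fixed-point-free involution of $N(c)$, so $|N(c)|=m(q-1)$ is even. Combining the two paragraphs, $C'\neq C$ yields either $\delta=4,q=2,m$ even, i.e.\ (2), or $\delta=3,m(q-1)$ even, i.e.\ (3).

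For the final assertion, fix even $m=2n\geq 4$, write $f_i=e_{2i-1}+e_{2i}\in\F_2^m$, and let $C=\langle f_i+f_j:1\leq i<j\leq n\rangle$, the set of sums $\sum_{i\in S}f_i$ over subsets $S\subseteq\{1,\dots,n\}$ of even size; its nonzero words have weight $2|S|\geq 4$, so $\delta=4$. I claim the group $X$ generated by the translations $v\mapsto v+c$ ($c\in C$), the coordinate permutations preserving $C$ (which include the automorphisms permuting the pairs $\{2i-1,2i\}$ and swapping within them, hence act transitively on the $m$ coordinates), and the translation $\tau\colon v\mapsto v+f_1$ does the job. The first two kinds fix $\Gamma_1(C)=\bigsqcup_k(C+e_k)$ setwise and act transitively on it, since translations by $C$ are regular on each weight-$1$ coset and a suitable coordinate permutation carries any such coset to any other. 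Moreover $\tau$ fixes $\Gamma_1(C)$ setwise because $f_1+e_{2i-1}+e_{2i}=f_1+f_i\in C$ shows $\tau$ merely swaps $C+e_{2i-1}\leftrightarrow C+e_{2i}$. Finally $f_1\notin C$ (it is an odd combination of the independent $f_i$), so $C+f_1\neq C$ and $X$ does not fix $C$; thus $C$ is $X$-neighbour transitive with $\delta=4$ and $X$ does not fix $C$, which completes the theorem.
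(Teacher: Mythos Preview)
Your proof is correct, and the overall architecture matches the paper's: reduce to $\delta\in\{3,4\}$, show $m(q-1)$ even via a pairing on $\Gamma_1(\alpha)$, then exhibit the binary example. The parity step is exactly the paper's pre-codeword partition (Lemma~3.4) in different clothing: your $w(v)$ is the image $\pi^y$ of a pre-codeword $\pi$, and your fixed-point-free involution on $N(c)$ is the paper's partition of $\Gamma_1(\alpha)$ into the $2$-element sets $\Gamma_1(\alpha)\cap\Gamma_1(\pi)$. Your construction is the paper's code $C=\{(\beta,\beta):\wt(\beta)\text{ even}\}$ after the coordinate interleaving $i\mapsto 2i-1$, $\tfrac{m}{2}+i\mapsto 2i$; your extra translation $\tau$ by $f_1$ is a translation by an element of the paper's $U\setminus C$, which is precisely how the paper shows $G_{\Gamma_1(C)}$ fails to fix $C$.

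Where you genuinely diverge is in cases (i) and (ii). The paper argues by contradiction through pre-codewords: for $\delta\geq 5$ two pre-codewords of the same $\alpha$ would map to codewords at distance $\leq 4$; for $\delta=4$, $q\geq 3$ it normalises a triple $(\alpha,\nu,\pi)$ and locates a neighbour $(1,1,0,\dots,0)$ whose nearest codeword would have weight $\leq 3$. You instead \emph{reconstruct} $C$ from $\Gamma_1(C)$: for $\delta\geq 5$ the bundles $N(c)$ are the classes of ``$d\leq 2$'' on $\Gamma_1(C)$ and $c$ is the unique common neighbour of its bundle; for $\delta=4$, $q\geq 3$ the connected components of the adjacency graph on $\Gamma_1(C)$ are the $(q-1)$-cliques $\{c+a e_i:a\neq c_i\}$, each determining a unique line whose missing vertex is $c$. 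This buys you a cleaner, coordinate-free statement (in these ranges $\Gamma_1(C)$ determines $C$ outright, so any $x\in G_{\Gamma_1(C)}$ fixes $C$), and in particular avoids the paper's passage to an equivalent code and explicit weight bookkeeping in the $\delta=4$ case. The paper's route, on the other hand, sets up the pre-codeword machinery once and reuses it uniformly, which makes the parity conclusion in (iii) fall out of the same lemma rather than requiring a separate pairing argument.
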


In Section \ref{secnot} we introduce our notation, and provide some
interesting facts about codes in Hamming graphs.  In Section
\ref{precodeword} we introduce \emph{pre-codewords}.  Pre-codewords
are useful in proving our main assertions, which is done in Section
\ref{main}.  In Section \ref{inf} we introduce a family of codes with
$\delta=4$ and $m$ even, that have the property that for each code in
this family, the setwise stabiliser of the neighbour set in the
automorphism group of the Hamming graph does not fix the code setwise.
Moreover, we prove that each code in this family is neighbour
transitive.     

\subsection{Relevance to Error Correction for Codes}

Error correcting codes are used to maintain the integrity of data across
noisy communication channels and in storage systems.  The standard
scenario involves a message, or data, being transmitted over a noisy
communication channel.  The noise that is added during transmission
may result in errors occurring and the message changing.  Error correcting codes
are used to encode the message before transmission by adding a certain
amount of redundancy so that the likelihood of recovering the original
message is increased.  For a broad examination of error correcting
codes see \cite{macw1}, \cite{peterson} and \cite{verapless}.   

An assumption frequently made in decoding procedures for error
correcting codes is that the probability of a transmission error is
independent both of the position in which the error occurs, and also of
the incorrect letter of the alphabet occurring in that position, see
\cite[p.4]{peterson} and \cite[p.5]{verapless}.  In other words, the
probability of each error occurring is equally likely.  The concept of
a code being neighbour transitive is a group theoretic analogue of
this assumption.        

Studying codes that are neighbour transitive has led to unexpected
new constructions of codes with large minimum distance, along with
their automorphism groups \cite[Chapter 5]{ngthesis}, as well as new
classifications of some families of \emph{completely regular codes},
see \cite{hadpap}.      

\section{Notation}\label{secnot}

In this section we introduce the Hamming graphs and automorphisms of
the Hamming graph, and some notation that allows us to work with codes and
their neighbours.

\subsection{Hamming Graphs}  The Hamming graph with parameters $m$,
$q$, has a vertex set which consists of $m$-tuples with entries from a
set $Q$ of size $q$.  An edge exists between two vertices if and only
if they differ in precisely one entry.  We denote the Hamming graph by
$\Gamma=H(m,q)$.  In the Hamming graph, the \emph{Hamming distance} between
two vertices is defined to be the number of entries in which the two vertices
differ.  We use $d(\alpha,\beta)$ to denote the distance between the
vertices $\alpha$ and $\beta$.   

Sometimes we want to make reference to a particular vertex in
$H(m,q)$.  In order to do this we let $0$ be a distinguished element
of the alphabet.  This allows us to consider the zero vertex,
$(0,\ldots,0)={\mathbf{0}}$.  Let $\beta$ be a vertex of
$H(m,q)$.  Then we define the \emph{weight of $\beta$} to be the
number of non-zero entries of $\beta$, which we denote by
$\wt(\beta)$.  This is equal to the distance between $\mathbf{0}$ and
$\beta$ in $H(m,q)$.  

The automorphism group of the Hamming graph $H(m,q)$ is the 
semi-direct product $N\rtimes L$ where $N\cong S_q^m$ and $L\cong S_m$, see
\cite[Theorem 9.2.1]{distreg}. Throughout this paper we denote this
group by $G$, and for a subset $S$ of vertices in $H(m,q)$ we let
$G_S$ denote the setwise stabiliser in $G$ of $S$.  Let
$g=(g_1,\ldots,  g_m)\in N$, $\sigma\in L$ and
$\alpha=(\alpha_1,\ldots,\alpha_m)\in H(m,q)$. Then $g$ and $\sigma$
act on $\alpha$ in the following way:    
\begin{align*}
\alpha^g&=(\alpha_1^{g_1},\ldots,\alpha_m^{g_m})\\
\alpha^\sigma&=(\alpha_{1^{\sigma^{-1}}},\ldots,\alpha_{m^{\sigma^{-1}}}).
\end{align*}

\noindent It is straight forward to show that $G$ acts transitively on
the vertex set of $H(m,q)$.   

Let $\beta$ be a vertex in the Hamming graph.  We define the 
\emph{neighbours of $\beta$} to be the
set $$\Gamma_1(\beta)=\{\gamma\in H(m,q)\,|\,d(\beta,\gamma)=1\}.$$
We have the following general result about vertices of $H(m,q)$. 

\begin{lemma} \label{size2} Let $\alpha$ and $\beta$ be distinct vertices
  in $H(m,q)$.  Suppose that $d(\alpha,\beta)=2$.  Then
  $|\Gamma_1(\alpha)\cap\Gamma_1(\beta)|=2$. 
\end{lemma}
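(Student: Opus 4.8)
The plan is to argue directly from the definition of Hamming distance, with no machinery needed beyond the description of the vertex set of $H(m,q)$. Since $d(\alpha,\beta)=2$, the vertices $\alpha$ and $\beta$ agree in all but two coordinates; denote by $i$ and $j$ the two coordinates in which they differ, so that $\alpha_i\neq\beta_i$, $\alpha_j\neq\beta_j$, and $\alpha_k=\beta_k$ for every $k\notin\{i,j\}$. A vertex $\gamma$ lies in $\Gamma_1(\alpha)\cap\Gamma_1(\beta)$ precisely when $d(\gamma,\alpha)=d(\gamma,\beta)=1$ (in particular $\gamma\neq\alpha,\beta$), and I would begin by locating the unique coordinate in which such a $\gamma$ can differ from $\alpha$.

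First I would show that the coordinate in which $\gamma$ differs from $\alpha$ must lie in $\{i,j\}$, and likewise the coordinate in which $\gamma$ differs from $\beta$: if $\gamma$ differed from $\alpha$ in some coordinate $a\notin\{i,j\}$, then since $\alpha_a=\beta_a$ the vertex $\gamma$ would differ from $\beta$ at $a$ as well, and additionally $\gamma_i=\alpha_i\neq\beta_i$ gives a second disagreement of $\gamma$ with $\beta$, contradicting $d(\gamma,\beta)=1$; the argument for $\beta$ is symmetric. Next I would rule out the possibility that $\gamma$ differs from $\alpha$ and from $\beta$ in the \emph{same} coordinate: if both disagreements occurred at $i$, then $\gamma$ would agree with both $\alpha$ and $\beta$ at $j$, which is impossible as $\alpha_j\neq\beta_j$, and similarly the common coordinate cannot be $j$.

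This leaves exactly two configurations. In the first, $\gamma$ differs from $\alpha$ only at $i$ and from $\beta$ only at $j$: then every entry of $\gamma$ is forced, namely $\gamma_i=\beta_i$ (as $\gamma$ agrees with $\beta$ off coordinate $j$), $\gamma_j=\alpha_j$ (as $\gamma$ agrees with $\alpha$ off coordinate $i$), and $\gamma_k=\alpha_k=\beta_k$ for $k\notin\{i,j\}$; one then checks this vertex indeed has distance $1$ to each of $\alpha$ and $\beta$. The second configuration is symmetric and produces one further vertex, and the two vertices so obtained are distinct because they already differ at coordinate $i$ (one has entry $\alpha_i$ there, the other $\beta_i$, and $\alpha_i\neq\beta_i$). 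Hence $|\Gamma_1(\alpha)\cap\Gamma_1(\beta)|=2$. I do not anticipate any genuine obstacle here; the only care required is keeping the short case analysis organised. Alternatively one could invoke the vertex-transitivity of $G$ together with the action of $L\cong S_m$ on coordinates to reduce to the case $\alpha=\mathbf 0$ and $\beta$ of weight $2$ supported on coordinates $1$ and $2$, which exhibits the two common neighbours explicitly.
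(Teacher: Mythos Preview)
Your argument is correct. It differs from the paper's proof in that you work directly with the two coordinates $i,j$ where $\alpha$ and $\beta$ disagree and perform a small case analysis, whereas the paper invokes the vertex-transitivity of $G$ to reduce to $\alpha=\mathbf{0}$, so that $\Gamma_1(\alpha)$ is the set of weight-$1$ vertices, and then counts the neighbours of the weight-$2$ vertex $\beta$ by weight to see that exactly two of them have weight $1$. Your route is slightly longer but entirely self-contained, needing nothing about $G$; the paper's route is shorter but relies on the ambient symmetry. You even mention the paper's reduction as an alternative in your final sentence, so you are aware of both approaches.
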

\begin{proof}  Since $G$ acts transitively on $H(m,q)$, we
  can assume without loss of generality that $\alpha=\mathbf{0}$.
  Therefore $\Gamma_1(\alpha)$ consists of all $m(q-1)$ weight one
  vertices.  As $d(\alpha,\beta)=2$, it follows that $\beta$
  has weight $2$.  Thus $\Gamma_1(\beta)$ consists of
  $(m-2)(q-1)$ vertices of weight $3$, $2(q-2)$ vertices of weight $2$
  and exactly $2$ vertices of weight $1$.  Thus
  $|\Gamma_1(\alpha)\cap\Gamma_1(\beta)|=2$.  
\end{proof}

Let $\Triples=\{(\alpha,\nu,\beta)\,|\, d(\alpha,\beta)=2$ and
$\nu\in\Gamma_1(\alpha)\cap\Gamma_1(\beta)\}.$ Let $y\in G$ act on
$(\alpha,\nu,\beta)\in \Triples$ as follows:
$(\alpha,\nu,\beta)^y=(\alpha^y,\nu^y,\beta^y)$.  

\begin{lemma}\label{trip} $G$ acts transitively on the set $\Triples$.
\end{lemma}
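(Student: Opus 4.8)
The plan is to reduce everything to the action on a single canonical triple, exploiting the transitivity of $G$ on vertices together with the rich stabiliser structure coming from $G = N \rtimes L$ with $N \cong S_q^m$, $L \cong S_m$. First I would fix the canonical triple $(\mathbf 0, \nu_0, \beta_0)$ where $\beta_0 = (b,b',0,\ldots,0)$ for some fixed nonzero $b,b' \in Q$ with, say, $b \ne 0$ and $b' \ne 0$, and where $\nu_0 = (b,0,\ldots,0)$ — one of the two common neighbours identified in Lemma~\ref{size2} (the other being $(0,b',0,\ldots,0)$). Given an arbitrary triple $(\alpha,\nu,\beta) \in \Triples$, the goal is to produce $y \in G$ with $(\alpha,\nu,\beta)^y = (\mathbf 0,\nu_0,\beta_0)$; since composition of such elements handles any pair of triples, transitivity follows.

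The key steps, in order, are as follows. (1) Use transitivity of $G$ on vertices to move $\alpha$ to $\mathbf 0$; replace the triple by its image, so now $\alpha = \mathbf 0$. As in the proof of Lemma~\ref{size2}, $\beta$ then has weight $2$, with its two nonzero entries in some coordinates $i < j$, and $\nu$ is one of the two common neighbours, namely $\nu$ agrees with $\mathbf 0$ in one of $\{i,j\}$ and with $\beta$ in the other; relabel so that $\nu$ agrees with $\beta$ in coordinate $i$ and with $\mathbf 0$ (i.e.\ is zero) in coordinate $j$. (2) Apply a permutation $\sigma \in L \cong S_m$ sending $i \mapsto 1$, $j \mapsto 2$: this fixes $\mathbf 0$, moves $\beta$ to a weight-$2$ vertex supported on coordinates $1,2$, and moves $\nu$ to the corresponding common neighbour supported on coordinate $1$. (3) Apply $g = (g_1,\ldots,g_m) \in N \cong S_q^m$ with $g_1, g_2$ chosen to send the (nonzero) first and second entries of the current $\beta$ to $b$ and $b'$ respectively, and each $g_k$ fixing $0$; then $\mathbf 0^g = \mathbf 0$, $\beta^g = \beta_0$, and $\nu^g = \nu_0$ since $\nu$'s first entry equals $\beta$'s first entry and gets sent to $b$, while its other entries are zero. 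Composing the three maps gives the desired $y$.

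I expect the main (though still routine) obstacle to be the bookkeeping in step (1)–(2): correctly using Lemma~\ref{size2} to pin down that $\nu$ must agree with $\alpha$ in exactly one of the two "active" coordinates and with $\beta$ in the other, and then ensuring the relabelling by $\sigma$ is consistent with which of those two neighbours we are targeting. One should also note that in step (3) we need $b' \ne 0$ as well as $b \ne 0$, and — if $q = 2$ — that $b = b'$ is the unique nonzero letter, which causes no problem: the argument goes through verbatim, since each $g_k$ is then forced to be the identity on $\{0\}$ but can still be nontrivial on the full alphabet only when $q \geq 2$, and for $q=2$ the relevant $g_k$ is the identity while the entries already match. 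Apart from this case-checking, every step is an explicit choice of group element, so the verification that $(\alpha,\nu,\beta)^y = (\mathbf 0,\nu_0,\beta_0)$ is a direct coordinatewise computation using the displayed formulas for the action of $N$ and $L$.
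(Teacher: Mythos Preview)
Your proposal is correct and follows essentially the same approach as the paper: reduce to $\alpha=\mathbf 0$ by vertex-transitivity, identify the weight-$1$ and weight-$2$ forms of $\nu$ and $\beta$, then use an element of the stabiliser $G_{\mathbf 0}\cong S_{q-1}\wr S_m$ (a coordinate permutation together with letter permutations fixing $0$) to send the triple to a fixed canonical one. The only differences are cosmetic---the paper takes the target letters to be $b=b'=1$ and applies the $N$- and $L$-parts in a single combined element $g\sigma$, whereas you keep $b,b'$ generic and separate the two steps; your discussion of the $q=2$ case is harmless but unnecessary once you simply take $b=b'$ equal to the unique nonzero letter.
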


\begin{proof} Let $0$ and $1$ be distinguished elements of $Q$, and let
  $(\alpha,\nu,\beta)$ be an element of $\Triples$.  Since $G$ acts
  transitively on the vertices of $H(m,q)$, an arbitrary triple can be
  mapped to one with first entry $\mathbf{0}$, so we can assume that
  $\alpha=\mathbf{0}$.  Therefore we can also assume that $\nu$ and
  $\beta$ 
  have weight 1 and 2 respectively.  
  Moreover, since $\nu\in\Gamma_1(\alpha)\cap\Gamma_1(\beta)$, $\beta$
  has a non-zero entry in common with $\nu$, that is common in entry
  $i$ position and in the element $a_i$ of $Q$.  Thus    
\begin{align*}
\alpha&=(0,\ldots,0)\\
\nu&=(0,\ldots,a_i,\ldots,0)\\
\beta&=(0,\ldots,a_i,\ldots,a_j,\ldots,0).
\end{align*}
The stabiliser of $\alpha$ in $G$ is isomorphic to the wreath product
$S_{q-1}\wr S_m$.  Consider a group element
$g\sigma=(g_1,\ldots,g_m)\sigma\in G_\alpha$,  
where $a_i^{g_i}=1, a_j^{g_j}=1$, and $i^\sigma=1, j^\sigma=2$.  Then
$$(\alpha,\nu,\beta)^{g\sigma}=((0,\ldots,0),(1,0,\ldots,0),(1,1,0,\ldots,0)).$$
As we can map any element of $\Triples$ to this particular element, it
follows that $G$ acts transitively on $\Triples$. 
\end{proof}

\subsection{Codes in $H(m,q)$}\label{codes}  

As mentioned before, we consider codes in $H(m,q)$ to be subsets of the
vertex set.  Let $C$ be a code in $H(m,q)$.  We define the
\emph{minimum distance of C} to
be  $$\delta=\min\{d(\alpha,\beta)\,|\,\alpha,\beta\in C, 
\alpha\neq\beta\}.$$  

\noindent
We define the \emph{set of neighbours of $C$} to be the set
$\Gamma_1(C)=(\cup_{\alpha\in C}\Gamma_1(\alpha))\backslash C$.  Observe
that if $\delta\geq 2$ then $\Gamma_1(C)=\cup_{\alpha\in
  C}\Gamma_1(\alpha)$, and if $\delta\geq 3$, this is a disjoint union.

Recall $G=\Aut(\Gamma)$.  We define the \emph{automorphism group of
  $C$} to be the setwise stabiliser of $C$ in $G$, which we denote by
$\Aut(C)$.  Traditionally coding theorists regard certain weight
preserving subgroups of $\Aut(C)$ as the automorphism group of $C$
(see \cite[Sec. 1.6-1.7]{huffpless} for a nice explanation).  However,
because we are interested in groups of automorphisms acting
transitively on neighbour sets of codes, which may contain vertices of
different weights, we use this more general notion of $\Aut(C)$.  The
following result shows that $\Gamma_1(C)$ is necessarily
$\Aut(C)$-invariant.           

\begin{lemma}\label{autc=autn} Let $C$ be a code in $H(m,q)$.  Then
  $\Aut(C)\leq\nei$.  
\end{lemma}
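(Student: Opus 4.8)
The plan is to prove Lemma \ref{autc=autn}, namely that $\aut C \leq \nei$; that is, every automorphism of $H(m,q)$ fixing $C$ setwise also fixes $\Gamma_1(C)$ setwise. Let $x \in \aut C$, so $C^x = C$. The key observation is that $\Gamma_1(C)$ is defined purely in terms of the graph structure of $H(m,q)$ together with the set $C$: namely $\Gamma_1(C) = \left(\bigcup_{\alpha \in C} \Gamma_1(\alpha)\right) \setminus C$, where $\Gamma_1(\alpha)$ is the set of vertices at graph-distance $1$ from $\alpha$. Since $x$ is a graph automorphism, it preserves adjacency and hence distance, so $\Gamma_1(\alpha)^x = \Gamma_1(\alpha^x)$ for every vertex $\alpha$.

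First I would take an arbitrary neighbour $\nu \in \Gamma_1(C)$ and show $\nu^x \in \Gamma_1(C)$. By definition of $\Gamma_1(C)$ there is a codeword $\alpha \in C$ with $\nu \in \Gamma_1(\alpha)$, and $\nu \notin C$. Applying $x$: $\nu^x \in \Gamma_1(\alpha)^x = \Gamma_1(\alpha^x)$, and $\alpha^x \in C^x = C$, so $\nu^x$ lies in $\bigcup_{\beta \in C}\Gamma_1(\beta)$. It remains to check $\nu^x \notin C$; but if $\nu^x \in C = C^x$ then $\nu = (\nu^x)^{x^{-1}} \in C^{x^{-1}} = C$ (using that $x^{-1} \in \aut C$ as well), contradicting $\nu \notin C$. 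Hence $\nu^x \in \Gamma_1(C)$, so $\Gamma_1(C)^x \subseteq \Gamma_1(C)$. Applying the same argument to $x^{-1}$ gives the reverse inclusion, so $\Gamma_1(C)^x = \Gamma_1(C)$ and therefore $x \in \nei$.

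This argument is almost entirely formal and I do not anticipate any real obstacle; the only point requiring a moment's care is the step showing $\nu^x \notin C$, which is why one works with both $x$ and $x^{-1}$ (equivalently, uses that $x$ permutes $C$ bijectively). One could alternatively phrase the whole thing by noting that $x$, being a graph automorphism fixing $C$ setwise, permutes the set $\bigcup_{\alpha\in C}\Gamma_1(\alpha)$ and permutes $C$, hence permutes their set-difference; but spelling out the membership chase as above keeps the write-up self-contained and elementary.
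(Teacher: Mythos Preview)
Your proof is correct and follows essentially the same approach as the paper: pick $\nu\in\Gamma_1(C)$ with adjacent codeword $\alpha$, observe that $\nu^x$ is adjacent to the codeword $\alpha^x$, and rule out $\nu^x\in C$ by applying $x^{-1}$. The only difference is that you explicitly invoke $x^{-1}$ to obtain the reverse inclusion, whereas the paper leaves this implicit (finiteness of the vertex set makes one inclusion suffice).
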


\begin{proof} Let $\nu\in \Gamma_1(C)$.  Then there exists $\alpha\in
  C$ such that $d(\nu,\alpha)=1$.  Let $x\in\Aut(C)$.  Because
  adjacency is preserved by automorphisms it follows that
  $d(\nu^x,\alpha^x)=1$, and so $d(\nu^x,C)\leq 1$.  Suppose $\nu^x\in
  C$.  Then because $x\in\Aut(C)$ it follows that
  $\nu=(\nu^x)^{x^{-1}}\in C$, which is a contradiction.  Thus
  $\nu^x\in\Gamma_1(C)$.  
\end{proof}

Suppose that $C$ is $X$-neighbour transitive (see Definition
\ref{def}).  Since $X$ and $\Aut(C)$ leave $\Gamma_1(C)$ invariant, we
have that $C$ is also $\langle X,\Aut(C)\rangle$-neighbour
transitive, where $\langle X,\Aut(C)\rangle$ is the group generated by
$X$ and $\Aut(C)$.  Therefore we may assume that $\Aut(C)\leq X$.  The
question that this paper addresses is: \emph{when does $\Aut(C)=X$?}    

We are interested in distinguishing between codes which are
inherently different.  Therefore we introduce the following concept. 

\begin{definition} Let $C$ and $C'$ be two codes in $H(m,q)$.  We say
  that $C$ and $C'$ are \emph{equivalent} if there exists an
  automorphism $y$ of $H(m,q)$ such that $C^y=C'$.
\end{definition}

\noindent Equivalence preserves several important properties.

\begin{lemma}\label{equiv} Let $C$ be a code in $H(m,q)$ with minimum
  distance $\delta$, and let $y\in G$.  Then $C^y$ has minimum
  distance $\delta$ and $\Aut(C^y)=y^{-1}\Aut(C)y$.  Moreover, if $C$
  is $X$-neighbour transitive then $C^y$ is $(y^{-1}Xy)$-neighbour transitive.  
\end{lemma}

\begin{proof}  As automorphisms preserve distance in $H(m,q)$, it
  follows that if $C$ has minimum distance $\delta$, so too does
  $C^y$.  It is straight forward to prove that
  $y^{-1}\Aut(C)y=\Aut(C^y)$.  Now suppose that $C$ is $X$-neighbour
  transitive.  It is clear that $y^{-1}Xy$ fixes the neighbours of
  $C^y$ setwise.  Let $\nu_1$ and $\nu_2$ be neighbours of $\alpha_1$
  and $\alpha_2$, respectively, in $C^y$.  Then $\nu_1^{y^{-1}}$ and
  $\nu_2^{y^{-1}}$ are neighbours of $\alpha_1^{y^{-1}}$ and
  $\alpha_2^{y^{-1}}$, respectively, in $C$. Since $C$ is
  $X$-neighbour transitive, there exists $x\in X$ such that
  $\nu_1^{y^{-1}x}=\nu_2^{y^{-1}}$, and so $\nu_1^{y^{-1}xy}=\nu_2$. 
\end{proof}

\section{Pre-codewords}\label{precodeword}

Let $C$ be a code in $H(m,q)$.  The main investigation of this paper
is to determine when the setwise stabiliser in $G$ of $\Gamma_1(C)$
fixes $C$ setwise.  In this section we introduce the concept of a 
\emph{pre-codeword} which enables us to examine this question
further.  Firstly we consider the case where the setwise stabiliser of
the neighbours does not fix the code setwise.

\begin{lemma}\label{exist2}  Let $C$ be a code with $\delta\geq 3$.
  Suppose there exists $\alpha\in C$ and $y\in\nei$ such
  that $\alpha^y\notin C$.  Then for all $\nu\in\Gamma_1(\alpha)$,
  there exists a unique vertex $\pi\in\Gamma_2(\alpha)$ such that $\pi^y\in
  C$ and $\nu\in\Gamma_1(\pi)$.  
\end{lemma}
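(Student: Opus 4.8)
The plan is to construct $\pi$ directly from $\nu$ by transporting it through $y$, and then to verify the three required properties — that $\pi^y\in C$, that $\nu\in\Gamma_1(\pi)$, and that $d(\alpha,\pi)=2$ — together with uniqueness, each time exploiting the hypothesis $\delta\geq 3$.

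First I would check that $\nu$ is genuinely a neighbour of $C$, i.e.\ $\nu\in\Gamma_1(C)$. Since $\alpha\in C$ and $d(\alpha,\nu)=1<3\leq\delta$, the vertex $\nu$ cannot itself be a codeword, so $\nu\in\Gamma_1(\alpha)\subseteq\Gamma_1(C)$. As $y\in\nei$ stabilises $\Gamma_1(C)$ setwise, $\nu^y\in\Gamma_1(C)$, so there is a codeword adjacent to $\nu^y$; moreover this codeword is \emph{unique}, because two distinct codewords adjacent to $\nu^y$ would be at distance at most $d(\nu^y,\cdot)+d(\nu^y,\cdot)=2$, contradicting $\delta\geq 3$ (equivalently, $\Gamma_1(C)=\bigcup_{\beta\in C}\Gamma_1(\beta)$ is a disjoint union when $\delta\geq 3$). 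Call this codeword $\pi'$ and set $\pi:=(\pi')^{y^{-1}}$, so that $\pi^y=\pi'\in C$. Since automorphisms of $\Gamma$ preserve distance, $d(\nu,\pi)=d(\nu^y,\pi^y)=d(\nu^y,\pi')=1$, that is, $\nu\in\Gamma_1(\pi)$.

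Next I would pin down $d(\alpha,\pi)$. The triangle inequality gives $d(\alpha,\pi)\leq d(\alpha,\nu)+d(\nu,\pi)=2$. It cannot be $0$: then $\pi=\alpha$, forcing $\pi^y=\alpha^y\notin C$, contradicting $\pi^y\in C$. It cannot be $1$ either: if $\pi\in C$ then $d(\alpha,\pi)=1<\delta$ is impossible, while if $\pi\notin C$ then $\pi\in\Gamma_1(\alpha)\subseteq\Gamma_1(C)$, so $\pi^y\in\Gamma_1(C)$, contradicting $\pi^y\in C$ since $\Gamma_1(C)\cap C=\emptyset$. Hence $d(\alpha,\pi)=2$, i.e.\ $\pi\in\Gamma_2(\alpha)$.

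Finally, for uniqueness: if $\pi_1,\pi_2\in\Gamma_2(\alpha)$ both satisfy $\pi_i^y\in C$ and $\nu\in\Gamma_1(\pi_i)$, then $d(\nu^y,\pi_i^y)=1$, so $\pi_1^y$ and $\pi_2^y$ are both codewords adjacent to $\nu^y$; by the uniqueness established above, $\pi_1^y=\pi_2^y$, hence $\pi_1=\pi_2$. The argument involves no deep obstacle; the point requiring care is to invoke $\delta\geq 3$ in the right places — once to guarantee $\nu\notin C$ so that $\nu$ really lies in $\Gamma_1(C)$, and again to guarantee that a vertex has at most one codeword neighbour, which underpins both the well-definedness of $\pi'$ and the uniqueness of $\pi$ — and to dispose correctly of the degenerate possibilities $d(\alpha,\pi)\in\{0,1\}$.
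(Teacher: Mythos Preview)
Your proof is correct and follows essentially the same route as the paper: construct $\pi$ as the $y^{-1}$-image of the unique codeword adjacent to $\nu^y$, then rule out $d(\alpha,\pi)\in\{0,1\}$ and deduce uniqueness from $\delta\geq 3$. The only cosmetic difference is in the $d(\alpha,\pi)=1$ case, where the paper argues via $\alpha^y\in\Gamma_1(C)\Rightarrow\alpha\in\Gamma_1(C)$, whereas you argue via $\pi\in\Gamma_1(C)\Rightarrow\pi^y\in\Gamma_1(C)$; these are dual uses of the same setwise-stabiliser hypothesis.
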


\begin{proof}  Note that $\Gamma_1(C)=\cup_{\beta\in
    C}\Gamma_1(\beta)$ since $\delta\geq 3$.  Let
  $\nu\in\Gamma_1(\alpha)$.  Since $y\in \nei$ it follows
  that $\nu^y\in\Gamma_1(C)$.  Hence there exists $\beta\in C$ such
  that $\nu^y\in\Gamma_1(\beta)$.  Let $\pi=\beta^{y^{-1}}$.  Then
  $\nu\in\Gamma_1(\pi)$.  It follows that $d(\alpha,\pi)\leq 2$.
  Moreover $\pi\neq\alpha$ since $\pi^y=\beta\in C$, while
  $\alpha^y\notin C$.  Consequently $\pi\notin C$ as $\delta\geq 3$.

Now suppose that $d(\alpha,\pi)=1$.  Then
$1=d(\alpha^y,\pi^y)=d(\alpha^y,\beta)$ and since $\beta\in C$ and
$\alpha^y\notin C$ this implies that $\alpha^y\in\Gamma_1(C)$.  Since
$y$ fixes $\Gamma_1(C)$ setwise it follows that $\alpha\in\Gamma_1(C)$
which is a contradiction since $\alpha\in C$.  Thus
$\pi\in\Gamma_2(\alpha)$, and so $\pi$ has all the required properties.
Suppose there exists $\pi'\neq\pi$ with $\pi'\in\Gamma_2(\alpha)$ such
that $\nu\in\Gamma_1(\pi')$ and $\pi'^y\in C$. Then
$d(\pi^y,\pi'^y)\leq 2$ contradicting the fact that $\delta\geq 3$.
\end{proof}

Thus for every neighbour $\nu$ of $\alpha$ there exists a unique
vertex $\pi\notin C$ adjacent to $\nu$ which gets mapped into $C$ by
$y$.  However, these vertices depend on $\alpha$ and the particular
$y$.  We now introduce the concept of a \emph{pre-codeword}. 

\begin{definition}\label{predef} Let $C$ be a code with $\delta\geq
  3$.  Let $\alpha$ be a codeword and
  $y\in\nei$ such that $\alpha^y\notin C$.  Then a \emph{pre-codeword}
  of $\alpha$ with respect to $y$ is a vertex $\pi$ such that 
  $d(\alpha,\pi)=2$ and $\pi^y\in C$.  We denote the set of all
    pre-codewords of $\alpha$ with respect to $y$ by $\Pre(\alpha,y)$.
\end{definition}

By definition each pre-codeword of $\alpha$ with respect to $y$ is at
distance two from $\alpha$.  We now use properties of Hamming graphs
to determine the cardinality of $\Pre(\alpha,y)$. 

\begin{lemma}\label{pre}  Let $C$ be a code with $\delta\geq 3$ and
  let $\alpha\in C$ and $y\in \nei$ such that
  $\alpha^y\notin C$.  Then
\begin{itemize}
\item[{\rm{(i)}}] $\{\Gamma_1(\alpha)\cap\Gamma_1(\pi)\,|\,\pi\in
  \Pre(\alpha,y)\}$ forms a partition of $\Gamma_1(\alpha)$. 
\item[{\rm{(ii)}}] $|\Pre(\alpha,y)|=m(q-1)/2$, in particular $m(q-1)$
  is even; and 
\item[{\rm{(iii)}}] for each $\pi\in \Pre(\alpha,y)$,
  $\Gamma_1(\pi)\subset\Gamma_1(C)$.
\end{itemize}
\end{lemma}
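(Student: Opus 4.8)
The plan is to exploit Lemma \ref{exist2} together with the basic combinatorics of the Hamming graph. For part (i), fix $\alpha \in C$ and $y \in \nei$ with $\alpha^y \notin C$. Lemma \ref{exist2} already tells us that for every $\nu \in \Gamma_1(\alpha)$ there is a \emph{unique} $\pi \in \Gamma_2(\alpha)$ with $\pi^y \in C$ and $\nu \in \Gamma_1(\pi)$; by Definition \ref{predef} this $\pi$ lies in $\Pre(\alpha,y)$. So each $\nu$ belongs to exactly one set $\Gamma_1(\alpha) \cap \Gamma_1(\pi)$ with $\pi \in \Pre(\alpha,y)$, which is precisely the statement that $\{\Gamma_1(\alpha)\cap\Gamma_1(\pi) \mid \pi \in \Pre(\alpha,y)\}$ partitions $\Gamma_1(\alpha)$ (discarding any empty blocks is unnecessary, since every $\pi \in \Pre(\alpha,y)$ is at distance $2$ from $\alpha$ and hence has at least one common neighbour with $\alpha$). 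The uniqueness clause of Lemma \ref{exist2} is what guarantees the blocks are pairwise disjoint.

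For part (ii), I would count $|\Gamma_1(\alpha)|$ two ways. Since $\alpha$ is a vertex of $H(m,q)$, we have $|\Gamma_1(\alpha)| = m(q-1)$. On the other hand, each $\pi \in \Pre(\alpha,y)$ satisfies $d(\alpha,\pi)=2$, so by Lemma \ref{size2} the block $\Gamma_1(\alpha)\cap\Gamma_1(\pi)$ has size exactly $2$. Since by (i) these blocks partition $\Gamma_1(\alpha)$, summing sizes gives $m(q-1) = 2\,|\Pre(\alpha,y)|$, whence $|\Pre(\alpha,y)| = m(q-1)/2$ and in particular $m(q-1)$ is even.

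For part (iii), fix $\pi \in \Pre(\alpha,y)$ and an arbitrary $\mu \in \Gamma_1(\pi)$; I must show $\mu \in \Gamma_1(C)$. Since $\pi^y \in C$ and automorphisms preserve adjacency, $\mu^y \in \Gamma_1(\pi^y)$, so $d(\mu^y, C) \le 1$. If $\mu^y \in C$, then $d(\mu^y,\pi^y)=1$ forces $d(\mu,\pi)=1$ with $\mu^y,\pi^y \in C$, contradicting $\delta \ge 3$ (as $\mu^y \ne \pi^y$ since $d(\mu,\pi)=1$). Hence $\mu^y \in \Gamma_1(C)$, and since $y$ stabilises $\Gamma_1(C)$ setwise, $\mu = (\mu^y)^{y^{-1}} \in \Gamma_1(C)$. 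As $\mu$ was arbitrary, $\Gamma_1(\pi) \subseteq \Gamma_1(C)$.

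I do not anticipate a serious obstacle here: the lemma is essentially a bookkeeping consequence of Lemma \ref{exist2} and Lemma \ref{size2}. The one point requiring a little care is making sure no block in the partition of (i) is empty — but this is immediate from $d(\alpha,\pi)=2$ forcing $|\Gamma_1(\alpha)\cap\Gamma_1(\pi)| = 2 > 0$ via Lemma \ref{size2} — and, in part (iii), checking the $\mu^y \in C$ case is genuinely excluded by the minimum distance hypothesis rather than merely by adjacency.
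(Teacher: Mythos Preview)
Your proposal is correct and follows essentially the same argument as the paper: part (i) via the existence and uniqueness in Lemma~\ref{exist2}, part (ii) by combining (i) with the block size $2$ from Lemma~\ref{size2}, and part (iii) by pushing a neighbour of $\pi$ through $y$ to a neighbour of the codeword $\pi^y$ and then pulling back using $y\in\nei$. Your explicit exclusion of the case $\mu^y\in C$ in (iii) is slightly more careful than the paper, which relies implicitly on the observation (stated in Section~\ref{codes}) that for $\delta\geq 2$ one has $\Gamma_1(C)=\cup_{\beta\in C}\Gamma_1(\beta)$.
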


\begin{proof} Let $\nu\in\Gamma_1(\alpha)$.  By Lemma \ref{exist2}
  there exists a unique vertex $\pi\in \Pre(\alpha,y)$ such that
  $\nu\in\Gamma_1(\pi)$.  Thus $\bigcup_{\pi\in
    \Pre(\alpha,y)}\Gamma_1(\alpha)\cap\Gamma_1(\pi)$ covers
  $\Gamma_1(\alpha)$, and each $\nu$ lies in a unique subset
  $\Gamma_1(\alpha)\cap\Gamma_1(\pi)$.  Hence
  $\{\Gamma_1(\alpha)\cap\Gamma_1(\pi)\,|\,\pi\in \Pre(\alpha,y)\}$ is
  a partition of $\Gamma_1(\alpha)$ and (i) holds.  Furthermore, by
  Lemma \ref{size2}, each cell of this partition comprises exactly two
  neighbours of $\alpha$.  Thus $2\times 
  |\Pre(\alpha,y)|=|\Gamma_1(\alpha)|=m(q-1)$, giving us (ii).  Let
  $\pi\in \Pre(\alpha,y)$ and suppose $\nu$ is a neighbour of $\pi$.
  Then $\nu^y$ is a neighbour of the codeword $\pi^y$, that is
  $\nu^y\in\Gamma_1(C)$.  As $y$ stabilises $\Gamma_1(C)$ setwise, we
  have that $\nu\in\Gamma_1(C)$ and (iii) follows.   
\end{proof}

Let $\pi\in \Pre(\alpha,y)$.  Consider the set $C(\pi)=\{\beta\in
C\,|\,(\beta,\nu,\pi)\in\Triples\textnormal{ for some
  $\nu\in\Gamma_1(\pi)$}\}$.  We now demonstrate some similar results
for $C(\pi)$ to those for $\Pre(\alpha,y)$.      

\begin{lemma}\label{pre2} Let $\delta$, $\alpha$ and $y$ be as in Definition
  \ref{predef} and let $\pi\in \Pre(\alpha,y)$, and $C(\pi)$ as
  above. Then  
\begin{itemize}
\item[{\rm{(i)}}] $C(\pi)=\Gamma_2(\pi)\cap C$,
\item[{\rm{(ii)}}] $\{\Gamma_1(\beta)\cap\Gamma_1(\pi)\,|\,\beta\in
  C(\pi)\}$ is a partition of $\Gamma_1(\pi)$,
\item[{\rm{(iii)}}] $|C(\pi)|=m(q-1)/2$, in particular $m(q-1)$ is even, and
\item[{\rm{(iv)}}] $\beta^y\notin C$ for all $\beta\in C(\pi)$.
\end{itemize}
\end{lemma}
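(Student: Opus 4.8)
The plan is to mirror, for the pre-codeword $\pi$, the arguments already used for $\alpha$ in Lemmas \ref{exist2} and \ref{pre}, exploiting the symmetry of the situation: $\alpha \in C$ with $\alpha^y \notin C$ is replaced by $\pi \notin C$ with $\pi^y \in C$, and $y$ is replaced by $y^{-1} \in \nei$ (note $\nei$ is a group, so $y^{-1}$ stabilises $\Gamma_1(C)$ setwise as well). First I would prove (i). The inclusion $C(\pi) \subseteq \Gamma_2(\pi) \cap C$ is immediate from the definition of $\Triples$, since $(\beta,\nu,\pi) \in \Triples$ forces $d(\beta,\pi)=2$. Conversely, suppose $\beta \in \Gamma_2(\pi) \cap C$; I must produce $\nu \in \Gamma_1(\beta) \cap \Gamma_1(\pi)$, which exists and has the triple property automatically by Lemma \ref{size2} applied to the pair $\beta,\pi$ at distance $2$. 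So (i) is essentially bookkeeping once Lemma \ref{size2} is invoked.

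Next I would establish (iv), which is the analogue of "$\pi \neq \alpha$" and is the key structural fact powering (ii). Let $\beta \in C(\pi)$, so $d(\beta,\pi)=2$, and suppose for contradiction $\beta^y \in C$. Then $d(\beta^y,\pi^y)=2$ with both $\beta^y,\pi^y \in C$, contradicting $\delta \geq 3$. This also shows $\beta \neq \alpha$ is not needed; what we really use is that $\beta^y \notin C$ while $\pi^y \in C$, so $\beta$ plays, relative to $y$ and $\pi$, exactly the role that $\alpha$ played relative to $y$ and its pre-codewords — but with the direction of $y$ reversed. Concretely, for the partition statement (ii), I would run the argument of Lemma \ref{exist2} with $\alpha$ replaced by $\pi$, $C$-membership replaced by its image condition, and $y$ replaced by $y^{-1}$: given $\nu \in \Gamma_1(\pi)$, since $\pi^y \in C$ we get $\nu^y \in \Gamma_1(C)$, hence $\nu^y \in \Gamma_1(\beta_0)$ for some $\beta_0 \in C$; set $\beta = \beta_0^{y^{-1}}$, so $\beta^y = \beta_0 \in C$, forcing $\beta \notin C$... wait — here the asymmetry bites: we want $\beta \in C$, not $\beta \notin C$. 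The correct move is instead: $\nu \in \Gamma_1(\pi) \subseteq \Gamma_1(C)$ by Lemma \ref{pre}(iii), so $\nu \in \Gamma_1(\beta)$ for some $\beta \in C$; then $d(\beta,\pi) \leq 2$, and $d(\beta,\pi) \neq 0$ since $\beta \in C \not\ni \pi$, and $d(\beta,\pi) \neq 1$ because otherwise $d(\beta^y,\pi^y) = 1$ with $\pi^y \in C$ would put $\beta^y \in \Gamma_1(C)$, whence (applying $y^{-1} \in \nei$) $\beta \in \Gamma_1(C)$, contradicting $\beta \in C$ and $\delta \geq 3$. So $d(\beta,\pi)=2$, i.e. $\beta \in C(\pi)$, and $\nu \in \Gamma_1(\beta)\cap\Gamma_1(\pi)$. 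Uniqueness of such $\beta$ follows exactly as in Lemma \ref{exist2}: two distinct choices $\beta,\beta' \in C$ both at distance $2$ from $\pi$ and adjacent to $\nu$ would satisfy $d(\beta,\beta') \leq 2$, violating $\delta \geq 3$. This gives that the cells $\Gamma_1(\beta) \cap \Gamma_1(\pi)$ cover $\Gamma_1(\pi)$ with each $\nu$ in a unique cell, which is (ii).

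Finally (iii) is immediate counting, as in Lemma \ref{pre}(ii): by Lemma \ref{size2} each cell $\Gamma_1(\beta)\cap\Gamma_1(\pi)$ has size exactly $2$, and the cells partition $\Gamma_1(\pi)$, which has size $m(q-1)$ since $\pi$ is a single vertex of $H(m,q)$; hence $|C(\pi)| = m(q-1)/2$ and in particular $m(q-1)$ is even. I expect the main obstacle to be purely expository rather than mathematical: getting the direction of $y$ versus $y^{-1}$ straight at each step, since $\pi$ and $\alpha$ sit on opposite sides of the map $y$ (one is a codeword, the other maps to a codeword), so the roles of "in $C$" and "image in $C$" swap, and one must consistently use that $\nei$ is a subgroup so that both $y$ and $y^{-1}$ fix $\Gamma_1(C)$. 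Once that dictionary is fixed, every step reduces to an application of Lemma \ref{size2} together with the $\delta \geq 3$ distance obstruction already exploited in Lemmas \ref{exist2} and \ref{pre}.
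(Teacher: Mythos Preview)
Your proposal is correct and follows essentially the same route as the paper: part (i) via Lemma \ref{size2}, the partition (ii) from covering plus uniqueness (the latter via $\delta\geq 3$), the count (iii) from Lemma \ref{size2}, and (iv) directly from $\delta\geq 3$ applied to $\beta^y,\pi^y\in C$. If anything you are more explicit than the paper about why the cells cover $\Gamma_1(\pi)$: the paper simply asserts this ``from the definition of $C(\pi)$'', whereas you (correctly) invoke Lemma \ref{pre}(iii) to get $\Gamma_1(\pi)\subseteq\Gamma_1(C)$ and then rule out $d(\beta,\pi)\in\{0,1\}$.
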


\begin{proof} Let $\beta\in C(\pi)$, thus $\beta\in C$.  By the definition of
  $\Triples$, $\beta\in\Gamma_2(\pi)$.  Thus
  $C(\pi)\subseteq\Gamma_2(\pi)\cap C$.  Conversely if
  $\beta'\in\Gamma_2(\pi)\cap C$, then there exists
  $\nu\in\Gamma_1(\pi)\cap\Gamma_1(\beta')$ by Lemma \ref{size2} and
  so $(\beta',\nu,\pi)\in \Triples$.  Thus $\beta'\in C(\pi)$.
  Therefore (i) holds.  From the definition of $C(\pi)$, $\cup_{\beta\in
    C(\pi)}(\Gamma_1(\beta)\cap\Gamma_1(\pi))=\Gamma_1(\pi)$.  If
  $\nu\in\Gamma_1(\beta)\cap\Gamma_1(\beta')\cap\Gamma_1(\pi)$ for
  $\beta,\beta'\in C(\pi)$, then $d(\beta,\beta')\leq 2$ and since 
  $\delta\geq 3$ it follows that $\beta=\beta'$.  Hence
  $\{\Gamma_1(\beta)\cap\Gamma_1(\pi)\,|\,\beta\in C(\pi)\}$ is a
  partition of $\Gamma_1(\pi)$, and (ii) holds.  Furthermore, by Lemma
  \ref{size2}, each cell of the partition comprises of exactly two
  neighbours of $\pi$.  Thus $2\times|C(\pi)|=|\Gamma_1(\pi)|=m(q-1)$, and
  (iii) follows.  In particular $m(q-1)$ is even.  Let
  $\beta\in C(\pi)$.  As $d(\pi^y,\beta^y)=2$ and 
  $\delta\geq 3$, we can conclude that $\beta^y\notin C$, so (iv) holds.  
\end{proof}

\section{Main Results}\label{main}

In this section we use the results from Section \ref{precodeword} to
find sufficient conditions under which $\nei$ fixes $C$ setwise.
Firstly we consider the case where $C$ has a large minimum distance.

\begin{lemma}\label{1} Let $C$ be a code with $\delta\geq 5$.  Then $C$ is
  fixed setwise by $\nei$.
\end{lemma}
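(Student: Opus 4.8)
The plan is to argue by contradiction, using the pre-codeword constructions of Section~\ref{precodeword}. Suppose that $\nei$ does not fix $C$ setwise, so that $C^y\neq C$ for some $y\in\nei$. Since $\nei$ is a group, on replacing $y$ by $y^{-1}$ if necessary we may assume there is a codeword $\alpha\in C$ with $\alpha^y\notin C$. As $\delta\geq 5\geq 3$, the lemmas of Section~\ref{precodeword} are available for this $\alpha$ and $y$.

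The first point I would establish is that $\Pre(\alpha,y)$ contains at least two distinct vertices. The minimum distance of a code in $H(m,q)$ cannot exceed $m$, so $m\geq\delta\geq 5$; since $q\geq 2$ this gives $m(q-1)\geq 5$, and hence $m(q-1)\geq 6$ because $m(q-1)$ is even by Lemma~\ref{pre}(ii). Therefore $|\Pre(\alpha,y)|=m(q-1)/2\geq 3$ by Lemma~\ref{pre}(ii), so I may choose two distinct pre-codewords $\pi_1,\pi_2\in\Pre(\alpha,y)$.

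The contradiction then follows from the triangle inequality. By Definition~\ref{predef}, $d(\alpha,\pi_1)=d(\alpha,\pi_2)=2$, so $d(\pi_1,\pi_2)\leq d(\pi_1,\alpha)+d(\alpha,\pi_2)=4$; applying the distance-preserving automorphism $y$ yields $d(\pi_1^y,\pi_2^y)=d(\pi_1,\pi_2)\leq 4$. But $\pi_1^y,\pi_2^y\in C$ by the definition of a pre-codeword, and $\pi_1^y\neq\pi_2^y$ since $\pi_1\neq\pi_2$; hence $d(\pi_1^y,\pi_2^y)\geq\delta\geq 5$, a contradiction. Thus no such $y$ and $\alpha$ exist, and $\nei$ fixes $C$ setwise.

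Since the real work is already done in Section~\ref{precodeword}, there is no serious obstacle; the only step requiring a little care is securing two \emph{distinct} pre-codewords, which is exactly where the hypothesis $\delta\geq 5$ (forcing $m$ large) enters, and which explains why the bound cannot be relaxed to $\delta\geq 4$: for $\delta=4$ the inequality $d(\pi_1^y,\pi_2^y)\leq 4$ is no contradiction, and Theorem~\ref{thm} shows that case does occur. Alternatively, one could run the same estimate inside $C(\pi)$ for a single $\pi\in\Pre(\alpha,y)$, using Lemma~\ref{pre2} in place of Lemma~\ref{pre}.
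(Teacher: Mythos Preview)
Your argument is correct and follows essentially the same route as the paper: assume an $\alpha\in C$ and $y\in\nei$ with $\alpha^y\notin C$, use Lemma~\ref{pre}(ii) together with $m\geq\delta\geq 5$ to obtain at least two distinct pre-codewords $\pi_1,\pi_2$, and derive the contradiction $d(\pi_1^y,\pi_2^y)\leq 4<\delta$. The only superfluous step is the replacement of $y$ by $y^{-1}$: since $y$ is a bijection and $C$ is finite, $C^y\neq C$ already forces some $\alpha\in C$ with $\alpha^y\notin C$.
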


\begin{proof} Since $\delta\geq 5$ it follows that $m\geq 5$.  Suppose
  $C$ is not fixed setwise by $\nei$.  Then by Lemma
  \ref{pre} there exist $\alpha\in C$ and $y\in\nei$ such that
  $|\Pre(\alpha,y)|\geq 3$.  Let $\pi_1$ and $\pi_2$ be distinct
  elements of $\Pre(\alpha,y)$.  Therefore
  $\pi_1,\pi_2\in\Gamma_2(\alpha)$ and $\pi_1^y, \pi_2^y\in C$. It
  follows that $d(\pi_1^y,\pi_2^y)=d(\pi_1,\pi_2)\leq 4$,
  contradicting the assumption that $\delta\geq 5$.  
\end{proof}

\noindent We now consider the case where $C$ has minimum distance of 4, but
also the size of $Q$ is at least $3$.

\begin{lemma}\label{2} Let $C$ be a code with $\delta=4$ and $q\geq 3$.
  Then $C$ is fixed setwise by $\nei$.
\end{lemma}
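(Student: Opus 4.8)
The plan is to argue by contradiction, as in Lemma \ref{1}, but this time extracting the contradiction from a counting argument rather than from a single distance estimate. Suppose $C$ is not fixed setwise by $\nei$. Then there exist $\alpha\in C$ and $y\in\nei$ with $\alpha^y\notin C$, and after replacing $C$ by an equivalent code (Lemma \ref{equiv}) we may assume $\alpha=\mathbf 0$. By Lemma \ref{pre}(ii) the set $\Pre(\mathbf 0,y)$ has exactly $m(q-1)/2$ elements, each of weight $2$. Since $q\geq 3$ this number strictly exceeds $m/2$, and the goal is to show this is impossible.

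The key point is that the pre-codewords are pairwise at distance exactly $4$. For distinct $\pi,\pi'\in\Pre(\mathbf 0,y)$ the images $\pi^y,\pi'^y$ are distinct codewords of $C$, so $d(\pi,\pi')=d(\pi^y,\pi'^y)\geq\delta=4$; on the other hand $d(\pi,\pi')\leq d(\pi,\mathbf 0)+d(\mathbf 0,\pi')=2+2=4$ by the triangle inequality and Definition \ref{predef}. Hence $d(\pi,\pi')=4$ for all distinct $\pi,\pi'$.

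Next I would translate this metric fact into a combinatorial statement about supports. Each $\pi\in\Pre(\mathbf 0,y)$ has $|\supp(\pi)|=2$, and I claim that distinct pre-codewords have disjoint supports. This follows from a short case check: the positions at which $\pi$ and $\pi'$ can differ all lie in $\supp(\pi)\cup\supp(\pi')$, and if $\supp(\pi)\cap\supp(\pi')\neq\emptyset$ then, being an intersection of two $2$-element sets, it has size $1$ or $2$, so $|\supp(\pi)\cup\supp(\pi')|\leq 3$ and $d(\pi,\pi')\leq 3$, contradicting the previous paragraph. Hence $\supp(\pi)\cap\supp(\pi')=\emptyset$ whenever $\pi\neq\pi'$; in particular the supports are pairwise distinct.

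Finally, $\{\supp(\pi)\,|\,\pi\in\Pre(\mathbf 0,y)\}$ is then a family of $m(q-1)/2$ pairwise disjoint $2$-subsets of $\{1,\dots,m\}$, so $m(q-1)=2\cdot\tfrac{m(q-1)}{2}\leq m$, which forces $q\leq 2$ and contradicts the hypothesis $q\geq 3$. The only step needing any care is the elementary case analysis bounding $d(\pi,\pi')$ in terms of the intersection pattern of $\supp(\pi)$ and $\supp(\pi')$; everything else is immediate from Section \ref{precodeword} and the basic properties of the Hamming metric, so I do not anticipate a genuine obstacle.
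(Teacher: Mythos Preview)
Your proof is correct, but it takes a genuinely different route from the paper's. The paper fixes a single pre-codeword $\pi$ and uses Lemma~\ref{pre}(iii), that $\Gamma_1(\pi)\subseteq\Gamma_1(C)$: after normalising via Lemma~\ref{trip} so that $\alpha=\mathbf{0}$ and $\pi=(2,1,0,\ldots,0)$ (this is where $q\geq 3$ enters, to have a third symbol available), the vertex $\nu_2=(1,1,0,\ldots,0)\in\Gamma_1(\pi)$ must be adjacent to some codeword $\beta$, and $\beta$ then has weight at most $3$, contradicting $d(\alpha,\beta)\geq\delta=4$. You instead use Lemma~\ref{pre}(ii) globally: all pairs of pre-codewords are forced to distance exactly $4$, hence have disjoint $2$-element supports, and the resulting packing bound $m(q-1)\leq m$ gives the contradiction with $q\geq 3$. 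Your argument is coordinate-free and avoids the triple-transitivity normalisation, while the paper's is shorter and needs only one pre-codeword rather than the full count; both extract the contradiction from the same underlying tension, namely that $\Pre(\alpha,y)$ is too large or too spread out when $q\geq 3$.
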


\begin{proof}  Suppose $C$ is not fixed setwise by $\nei$.
  Then there exist $\alpha\in C$ and $y\in \nei$ such that
  $\alpha^y\notin C$.  Since $q\geq 3$, we let $0,1$ and $2$ be
  distinct elements of $Q$.  Let $\pi\in \Pre(\alpha,y)$ and
  $\nu\in\Gamma_1(\alpha)\cap\Gamma_1(\pi)$.  By definition we know
  that $d(\alpha,\pi)=2$.  We also know, by Lemma \ref{trip}, that $G$
  is transitive on triples $(\alpha',\nu',\pi')$ with
  $\alpha',\nu',\pi'$ vertices of $H(m,q)$ such that $d(\alpha',\pi')=2$
  and $\nu'\in\Gamma_1(\alpha')\cap\Gamma_1(\pi')$. So replacing $C$ by
  an equivalent code if necessary, we may assume that $\alpha={\mathbf
    0}$, $\nu=(2,0,\ldots,0)$ and $\pi=(2,1,0,\ldots,0)$, and by Lemma
  \ref{equiv}, we can still assume that the minimum distance is
  $\delta=4$.  By part (iii) of Lemma \ref{pre},
  $\Gamma_1(\pi)\subseteq\Gamma_1(C)$.  Thus
  $\nu_2=(1,1,0,\ldots,0)\in\Gamma_1(C)$, and so $\nu_2$ is the
  neighbour of a codeword $\beta$, say.  It follows that $\beta$ must
  have weight either $1$,$2$ or $3$ and hence that
  $d(\alpha,\beta)\leq 3<\delta$, which is a contradiction.  
\end{proof}

\begin{lemma}\label{3} Let $C$ be a code with $\delta\geq 3$ with $q$ even and
  $m$ odd.  Then $C$ is fixed setwise by $\nei$.
\end{lemma}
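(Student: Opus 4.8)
The plan is to argue by contradiction and extract a parity obstruction from Lemma~\ref{pre}. Suppose $C$ is \emph{not} fixed setwise by $\nei$. Then by definition there exist a codeword $\alpha\in C$ and an automorphism $y\in\nei$ with $\alpha^y\notin C$. This is exactly the hypothesis appearing in Lemma~\ref{pre} (and the standing assumption $\delta\geq 3$ holds here), so part~(ii) of that lemma applies and gives $|\Pre(\alpha,y)|=m(q-1)/2$; in particular $m(q-1)$ must be even.

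Next I would simply compute the parity of $m(q-1)$ under the hypotheses of the present lemma. Since $q$ is even, $q-1$ is odd; since $m$ is odd, the product $m(q-1)$ is a product of two odd integers, hence odd. This contradicts the conclusion that $m(q-1)$ is even obtained in the previous paragraph. Therefore no pair $(\alpha,y)$ with $\alpha\in C$, $y\in\nei$ and $\alpha^y\notin C$ can exist; equivalently, every $y\in\nei$ satisfies $\alpha^y\in C$ for all $\alpha\in C$, i.e.\ $\nei$ fixes $C$ setwise.

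There is essentially no obstacle to overcome: the entire substance is the counting argument already packaged in Lemma~\ref{pre}(ii), and the present statement is just the case in which that count is numerically impossible. (One could equally invoke Lemma~\ref{pre2}(iii) in place of Lemma~\ref{pre}(ii); both yield the needed divisibility of $m(q-1)$ by $2$.) The only point requiring care is to confirm that the hypotheses of Lemma~\ref{pre} are met before applying it, namely $\delta\geq 3$, which is given.
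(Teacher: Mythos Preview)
Your proof is correct and follows essentially the same argument as the paper: both argue by contradiction, invoke Lemma~\ref{pre}(ii) to force $m(q-1)$ to be even, and then observe this is impossible when $m$ is odd and $q$ is even. Your version merely spells out a few more details (e.g.\ unpacking what ``not fixed setwise'' means before citing Lemma~\ref{pre}), but the substance is identical.
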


\begin{proof}  Suppose $C$ is not fixed setwise by $\nei$.
  Then by Lemma \ref{pre} there exist $\alpha\in C$ and $y\in\nei$
  such that $2\times |\Pre(\alpha,y)|=m(q-1)$.  Thus $2$ divides either
  $m$ or $q-1$.  If $m$ is odd and $q$ is even this is not possible.
\end{proof}

\noindent Lemmas \ref {1}--\ref{3} together yield a proof that at least
one of $(1), (2)$ or $(3)$ of Theorem \ref{thm} holds.

\section{Infinite Family of Binary Codes $C$}\label{inf}

In this section we define a family of binary codes in $H(m,2)$ where
$m$ is even and at least 4.  For a code $C$ in this family, we prove
that $C$ has minimum distance $\delta=4$, is $\nei$-neighbour
transitive, and that $\nei$ does not fix $C$ setwise.  That is, the
final statement of Theorem \ref{thm} holds for this family of codes.     

We can view the vertex set of  $H(m,2)$ as the vector space $\F_2^m$
of $m$-dimensional row vectors over $\F_2$. With this in mind, for
each $i\in M=\{1,\ldots,m\}$,  we let $e_i$ denote the vertex with $1$
only in the $i^{th}$ position.  Furthermore, because the base group
$N\cong S_2^m$ of $G=\Gr=N\rtimes L\cong S_2\wr S_m$ is regular on the
vertices of $H(m,2)$, we may identify $N$ with the group of
translations of $\F_2^m$, and $G$ with a subgroup of the affine
group $\AGL(m,2)$.  More precisely $N$ consists of the translations
$\phi_\alpha$, where $\beta^{\phi_\alpha}=\beta+\alpha$ for
$\alpha,\beta\in \F_2^m$, and if $\mathbf{0}$ is the zero vector, then
$G=N\rtimes G_{\mathbf{0}}$ where $G_{\mathbf{0}}$ is the group of
permutation matrices in $\GL(m,2)$.  For any subset $S$ in $\F_2^m$ we
let $\Perm(S)$ denote the group of permutation matrices that fix $S$
setwise.   

Let $m$ be even and at least $4$.  Then we can consider vectors in
$\F_2^m$ as $2$-tuples of vectors from $\Fm2$.  In particular, for any
vertex $\alpha=(\alpha_1,\ldots,\alpha_m)\in \F_2^m$,  
we can identify $\alpha$ with the $2$-tuple $(\beta,\gamma)$ where
$\beta=(\alpha_1,\ldots,\alpha_{\frac{m}{2}})$, 
$\gamma=(\alpha_{\frac{m}{2}+1},\ldots,\alpha_m)\in\Fm2$.  Given this,
we define the following subsets of $\F_2^m$:\begin{align*}U=&\{(\beta,\beta)\in \F_2^m\,:\,\beta\in \Fm2\},\\
C=&\{(\beta,\beta)\in \F_2^m\,:\,\wt(\beta)\textnormal{ is even in $\Fm2$}\}.  
\end{align*}  It follows from the definitions that $U$ and $C$ are
subspaces of $\F_2^m$, and thus are linear codes.  The minimum weight of
vectors in $U$ and $C$ is $2$ and $4$ respectively.  Therefore the
minimum distance of $U$ and $C$ is $\delta_U=2$ and $\delta_C=4$
respectively.  Also, it is straight forward to deduce   
that  $$\Gamma_1(U)=\{(\beta,\gamma)\in
\F_2^m\,:\,d(\beta,\gamma)=1\},$$ where
$d(\beta,\gamma)$ is the Hamming distance between $\beta$ and $\gamma$
in $H(m/2,2)$.  We now show that the neighbour sets of $U$ and $C$
coincide.    
\begin{lemma}\label{neieq} $\Gamma_1(U)=\Gamma_1(C)$.
\end{lemma}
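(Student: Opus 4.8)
The plan is to prove the two inclusions $\Gamma_1(C)\subseteq\Gamma_1(U)$ and $\Gamma_1(U)\subseteq\Gamma_1(C)$ separately, using the explicit description of $\Gamma_1(U)$ given just above the statement, namely $\Gamma_1(U)=\{(\beta,\gamma)\in\F_2^m : d(\beta,\gamma)=1\}$, together with the observation that $C\subseteq U$ and $U$ is the disjoint union of $C$ and its coset $(e_1',e_1')+C$ (writing $e_1'$ for the first standard basis vector of $\Fm2$), i.e.\ the vectors $(\beta,\beta)$ with $\wt(\beta)$ odd. Since $\delta_C=4\geq 3$ and $\delta_U=2$, the neighbour sets are $\Gamma_1(C)=\bigcup_{\alpha\in C}\Gamma_1(\alpha)$ and $\Gamma_1(U)=\bigcup_{\alpha\in U}\Gamma_1(\alpha)$; in particular $\Gamma_1(C)\subseteq\Gamma_1(U)\cup U$, and since $C\subseteq U$, no neighbour of $C$ lies in $U$ (a vertex at distance $1$ from a codeword $(\beta,\beta)$ has the form $(\beta+e_i',\beta)$ or $(\beta,\beta+e_i')$, which is not of the form $(\gamma,\gamma)$), so $\Gamma_1(C)\subseteq\Gamma_1(U)$. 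This direction is essentially immediate.

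For the reverse inclusion, take $\nu=(\beta,\gamma)$ with $d(\beta,\gamma)=1$, say $\gamma=\beta+e_i'$ for some $i\in\{1,\dots,m/2\}$. I want to exhibit a codeword of $C$ adjacent to $\nu$. The two natural candidates are $(\beta,\beta)$ and $(\gamma,\gamma)$, obtained by changing one of the two coordinate blocks of $\nu$ so that the blocks agree; each differs from $\nu$ in exactly one entry, hence is adjacent to $\nu$ in $H(m,2)$. Exactly one of $\wt(\beta)$ and $\wt(\gamma)=\wt(\beta)\pm 1$ is even, so exactly one of these two vectors lies in $C$. That vector is the required codeword, so $\nu\in\Gamma_1(\alpha)$ for some $\alpha\in C$, and since $\nu\notin U\supseteq C$ (as $d(\beta,\gamma)=1\neq 0$) we conclude $\nu\in\Gamma_1(C)$. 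This completes the second inclusion.

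The argument is short, and the only point requiring any care is bookkeeping: one must be careful that $\nu$ itself is genuinely \emph{not} a codeword (so that it actually belongs to the neighbour set rather than to $C$), which follows because $d(\beta,\gamma)=1$ forces $\nu\notin U$; and one must correctly track the parity shift when a single coordinate of $\beta$ is toggled to obtain $\gamma$. I do not anticipate a real obstacle here — the lemma is a direct computation from the definitions of $U$ and $C$ and the displayed formula for $\Gamma_1(U)$; the substance of the section lies in the later steps (establishing neighbour transitivity and that $\nei$ does not fix $C$ setwise), where this lemma is the key enabling fact, since it shows $\Gamma_1(C)=\Gamma_1(U)$ is stabilised by $\Aut(U)$, a group visibly larger than $\Aut(C)$.
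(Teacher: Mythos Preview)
Your proof is correct and follows essentially the same approach as the paper: the inclusion $\Gamma_1(C)\subseteq\Gamma_1(U)$ comes from $C\subseteq U$ together with $\delta_U\geq 2$, and the reverse inclusion is the parity observation that for $(\beta,\gamma)$ with $d(\beta,\gamma)=1$, exactly one of $(\beta,\beta)$, $(\gamma,\gamma)$ lies in $C$. Your extra care in verifying $\nu\notin C$ (via $\nu\notin U$) is a detail the paper leaves implicit, but the arguments are the same.
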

\begin{proof}  Because $\delta_U=2$ we have that $\Gamma_1(U)=\cup_{(\beta,\beta)\in
    U}\Gamma_1((\beta,\beta))$, and so
  $\Gamma_1(C)\subseteq\Gamma_1(U)$.  Conversely, suppose 
  $(\beta,\gamma)\in\Gamma_1(U)$.  Then $d(\beta,\gamma)=1$ in
  $H(m/2,2)$, and so $\wt(\beta)$ and $\wt(\gamma)$ have opposite
  parity. From this we conclude that either $(\beta,\beta)\in C$ or
  $(\gamma,\gamma)\in C$.  In either case it follows that
  $(\beta,\gamma)\in\Gamma_1(C)$.     
\end{proof}

Let $J_1=\{1,\ldots,\frac{m}{2}\}$ and
$J_2=\{\frac{m}{2}+1,\ldots,m\}$, and consider the partition  
$\mathcal{J}=\{J_1,J_2\}$ of $M$.  Let $H$ be the stabiliser of
$\mathcal{J}$ in $S_m$.  Then $H\cong S_{m/2}\wr S_2$, and a
typical element of $H$ is of the form
$(\sigma_1,\sigma_2)\hat{\sigma}$ where $\sigma_1,\sigma_2\in 
S_{\frac{m}{2}}$ and $\hat{\sigma}\in S_2$.  Let   
$K=\{(\sigma_1,\sigma_2)\hat{\sigma}\in H\,:\,\sigma_1=\sigma_2\}$.
Then $K\cong S_{\frac{m}{2}}\times S_2$, a transitive subgroup of
$S_m$ and we can identify $K$ with a subgroup of permutation matrices
in $\GL(m,2)$.  As such, for $y=(\sigma,\sigma)\hat{\sigma}\in K$ we have that
$(\beta,\beta)^y=(\beta^\sigma,\beta^\sigma)$ for all
$(\beta,\beta)\in U$.  Therefore $K$ stabilises $U$.  Furthermore, 
because permutation matrices in $\GL(m/2,2)$ preserve weights of
vectors in $\Fm2$, it follows that $K$ stabilises $C$ also.         

\begin{lemma}\label{cneitrans} $C$ is $\Aut(C)$-neighbour transitive.     
\end{lemma}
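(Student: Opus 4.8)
The plan is to exhibit an explicit subgroup $X\leq\Aut(C)$ that acts transitively on $\Gamma_1(C)$; transitivity of the possibly larger group $\Aut(C)$ then follows immediately. Since $C$ is a linear code, every translation $\phi_c$ with $c\in C$ fixes $C$ setwise, so the translation subgroup $T_C=\{\phi_c:c\in C\}$ lies in $\Aut(C)$. The subgroup $K\cong S_{m/2}\times S_2$ of permutation matrices was already shown above to stabilise $C$, and it normalises $T_C$ because conjugating $\phi_c$ by a permutation matrix $y$ gives $y^{-1}\phi_c y=\phi_{c^y}$ with $c^y\in C$. Hence $X=T_C\rtimes K\leq\Aut(C)$, and it remains to prove that $X$ is transitive on $\Gamma_1(C)$.

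By Lemma~\ref{neieq} and the description of $\Gamma_1(U)$ preceding it, every neighbour of $C$ has the form $\nu=(\beta,\beta+e_i)$ with $\beta\in\Fm2$ and $i\in J_1$, where $e_i$ denotes the weight-one vector of $\Fm2$ supported on coordinate $i$. I fix the target $\nu_0=(\mathbf 0,e_1)$ (which is indeed a neighbour, since it is not of the form $(\gamma,\gamma)$) and reduce $\nu$ to $\nu_0$ in three moves. First, apply the diagonal element of $K$ with trivial $S_2$-part, namely $(\sigma,\sigma)$ with $\sigma\in S_{m/2}$ chosen so that $i^\sigma=1$; this sends $\nu$ to $(\beta^\sigma,\beta^\sigma+e_1)$, so we may assume $i=1$. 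Second, since toggling one coordinate of $\beta$ reverses the parity of $\wt(\beta)$, exactly one of $\wt(\beta)$, $\wt(\beta+e_1)$ is even. If $\wt(\beta)$ is even then $(\beta,\beta)\in C$, and translating by it sends $(\beta,\beta+e_1)$ to $(\mathbf 0,e_1)=\nu_0$. If instead $\wt(\beta+e_1)$ is even then $(\beta+e_1,\beta+e_1)\in C$, and translating by it sends $(\beta,\beta+e_1)$ to $(e_1,\mathbf 0)$; then, third, the swap element $(\id,\id)\hat\sigma\in K$ that interchanges the two halves carries $(e_1,\mathbf 0)$ to $(\mathbf 0,e_1)=\nu_0$. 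In every case $\nu$ lies in the $X$-orbit of $\nu_0$, so $X$ is transitive on $\Gamma_1(C)$ and the lemma follows.

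The only points requiring care are routine: that $T_C\rtimes K$ really is a subgroup of $\Aut(C)$ (linearity of $C$, the stabilisation of $C$ by $K$ established above, and the normalising identity $y^{-1}\phi_c y=\phi_{c^y}$), and the parity bookkeeping in the second move, which is the one place it matters that we work with $C$ rather than $U$. I expect this parity check to be the main — and rather mild — obstacle: one must confirm that at least one of $(\beta,\beta)$, $(\beta+e_1,\beta+e_1)$ is a codeword of $C$, which is immediate since flipping the first coordinate of $\beta$ changes the parity of its weight. Everything else is direct computation with permutation matrices and translations acting on $2$-tuples of vectors from $\Fm2$.
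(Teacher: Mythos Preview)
Your proof is correct and uses essentially the same ingredients as the paper's: translations by codewords of $C$ together with the subgroup $K\cong S_{m/2}\times S_2$. The paper's argument is organised slightly differently---it translates an arbitrary neighbour $\nu_i$ by an adjacent codeword $\alpha_i\in C$ to land in $\Gamma_1(\mathbf 0)=\{e_j:j\in M\}$ and then invokes the transitivity of $K$ on $M$ directly---whereas you permute first, translate second, and swap last; but the content is the same, and your explicit parity check is exactly the computation hidden in the paper's appeal to ``there exists $\alpha_i\in C$ with $\nu_i\in\Gamma_1(\alpha_i)$'' (equivalently, the proof of Lemma~\ref{neieq}).
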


\begin{proof}  By Lemma \ref{autc=autn}, $\Aut(C)$ fixes
  $\Gamma_1(C)$ setwise.  Let $\nu_1,\nu_2\in \Gamma_1(C)$.  Then
  there exist $\alpha_i\in C$ such that $\nu_i\in\Gamma_1(\alpha_i)$ for 
  $i=1,2$.  It follows that the translation $\phi_{\alpha_i}$ 
  maps $\alpha_i$ to $\mathbf{0}$ for $i=1,2$.  Because
  adjacency is preserved by automorphisms of $\Gamma$, we have that
  $\nu_i^{\phi_{\alpha_i}}\in\Gamma_1(\mathbf{0})$ for
  $i=1,2$. Therefore there exists $s,t\in M$ such that 
  $\nu_1^{\phi_{\alpha_1}}=e_s$ and $\nu_2^{\phi_{\alpha_2}}=e_t$.
  Since $K\leq\Aut(C)$ acts transitively on $M$, and because
  permutation matrices preserve weight, it follows that there exists
  $\sigma\in K$ such that $e_s^\sigma=e_t$.  Hence
  $\nu_1^{\phi_{\alpha_1}\sigma\phi_{\alpha_2}}=\nu_2$.  Finally,
  because $\alpha_i\in C$ and $C$ is linear, we have
  $\phi_i\in\Aut(C)$, and also $\sigma\in K\leq\Aut(C)$, so
  $\phi_1\sigma\phi_2\in\Aut(C)$.         
\end{proof}

Since $U$ is a binary linear code, we can conclude from \cite[Lemma
  3.1]{guidici} that $\Aut(U)=N_U\rtimes\Perm(U)$, where $N_U$ is the  
group of translations generated by $U$.  By Lemma \ref{autc=autn},
$\Aut(U)$ fixes $\Gamma_1(U)$ setwise, and by Lemma \ref{neieq},
$\Gamma_1(U)=\Gamma_1(C)$.  Therefore $\Aut(U)\leq \nei$.
Since $N_U$ does not fix $C$ setwise it follows that $\nei$
does not fix $C$ setwise.  Furthermore, Lemma \ref{autc=autn} and
Lemma \ref{cneitrans} imply that $\nei$ acts transitively on
$\Gamma_1(C)$.  Hence $C$ is $\nei$-neighbour transitive but is not
fixed setwise by $\nei$.  Thus we have proved the final statement of 
Theorem \ref{thm}.  

\begin{remark}  Equivalent codes to the ones described in
  this section can be found in \cite[Chapter 3]{ngthesis}, in which
  the automorphism groups of these codes are given.  From this we can
  conclude that for $m\geq 6$, $\Aut(U)=N_U\rtimes K'$ and
  $\Aut(C)=N_C\rtimes K'$ where $K'\cong S_2\wr S_{m/2}$. 
  Furthermore, $\nei=N_U\rtimes K'$.  The case where $m=4$
  is an exception.  In this case $C=\{\mathbf{0}, 
  \mathbf{1}\}$ (where $\mathbf{1}=(1,1,1,1)$) and $\Aut(C)=N_C\rtimes
  G_{\mathbf{0}}$.  However, $\Aut(U)=N_U\rtimes K'$ where $K'\cong D_8$
  and $\nei=N_W\rtimes G_{\mathbf{0}}$ where $W$ is the
  subspace of $\F_2^4$ consisting of all even weight vectors.  Again
  it follows that $C$ is $\nei$-neighbour transitive
  but $\nei$ does not fix $C$ setwise.  Interesting, however, because
  $\Gamma_1(C)=\Gamma_1(U)$, in this case we also have that $U$ is
  $G_{\Gamma_1(U)}$-neighbour transitive but $G_{\Gamma_1(U)}$ does
  not fix $U$ setwise.     
\end{remark}

\section[]{Acknowledgement}
The authors would like to thank Professor Alice Niemeyer for her
assistance in the preparation of this paper.  The authors would also
like to thank the referees for their comments which helped improve the
exposition.  


\end{document}